\theoremstyle{definition}
\newtheorem{defn}{{Definition}}[section]
\newtheorem*{defn*}{{Definition}}
\newtheorem{que}{{Question}}
\newtheorem*{Q*}{{Question}}
\theoremstyle{plain}
\newtheorem{thm}[defn]{Theorem}
\newtheorem*{thm*}{Theorem}
\newtheorem{cor}[defn]{{Corollary}}
\newtheorem*{cor*}{Corollaire}
\newtheorem{prop}[defn]{{Proposition}}
\newtheorem*{prop*}{{Proposition}}
\newtheorem{lem}[defn]{{Lemma}}
\newtheorem{fact}[defn]{{Fact}}
\theoremstyle{remark}
\newtheorem{rmk}[defn]{{Remark}}
\newcommand{\ima}{\mathrm{Im}}
\renewcommand{\ker}{\mathrm{Ker}}
\begin{document}

\title{Small skew fields}
\author{C\'edric Milliet}
\curraddr[C\'edric Milliet]{Universit\'e de Lyon, Universit\'e Lyon 1, Institut Camille Jordan UMR 5208 CNRS,
43 boulevard du 11 novembre 1918,\newline%
\indent 69622 Villeurbanne Cedex, France}%
\email[C\'edric Milliet]{milliet@math.univ-lyon1.fr}%
\keywords{Smallness, skew fields}
\subjclass[2000]{03C15, 03C50, 03C60, 12E15}

\maketitle

\begin{abstract}A small field of positive characteristic is commutative\end{abstract}

Wedderburn showed in 1905 that finite fields are commutative. As for infinite fields, we know that superstable \cite[Cherlin, Shelah]{CS} and supersimple \cite[Pillay, Scanlon, Wagner]{PSW} ones are commutative. In their proof, Cherlin and Shelah use the fact that a superstable field is algebraically closed. Wagner showed that a small field is algebraically closed \cite{W1}, and asked whether a small field should be commutative. We shall answer this question positively in non-zero characteristic.

\section{Preliminaries}

\begin{defn}A theory is \emph{small} if it has countably many $n-$types without parameters for all integer $n$. A structure is \emph{small} if its theory is so.\end{defn}

We shall denote $dcl(A)$ the definable closure of a set $A$. Note that if $K$ is a field and $A$ a subset of $K$, then $dcl(A)$ is a field too. Smallness is clearly preserved under interpretation and addition of finitely many parameters.

Let $D,D_1,D_2$ be $A$-definable sets in some structure ${M}$ with $A\subset M$. We define \emph{the Cantor-Bendixson rank} $CB_A(D)$ and \emph{degree} $dCB_A(D)$ of $D$ over $A$.

\begin{defn}By induction, we define
\begin{itemize}
\item[] $CB_A(D)\geq 0$ if $D$ is not empty
\item[] $CB_A(D)\geq\alpha+1$ if there is an infinite family of disjoint $A$-definable subsets $(D_i)_{i\in\omega}$ of $D$, such that $CB_A(D_i)\geq\alpha$ for all $i<\omega$.

\item[] $CB_A(D)\geq\beta$ for	a limit ordinal $\beta$ if $CB_A(D)\geq\alpha$ for all $\alpha<\beta$.
\end{itemize}\end{defn}

\begin{defn} $dCB_A(D)$ is the greatest integer $d$ such that $D$ can be divided into $d$ disjoint $A$-definable sets, with same rank over $A$ as $D$.\end{defn} 

\begin{prop} If $M$ is small and $A$ is a finite set,
\begin{itemize}
\item[(i)] $CB_A(M)$ is ordinal.
\item[(ii)] $dCB_A$ is well defined.
\item[(iii)] If $D_1\subset D_2$, then $CB_A(D_1)\leq CB_A(D_2)$.
\item[(iv)] $CB_A(D_1\cup D_2)=max\{CB_A(D_1),CB_A(D_2)\}$.
\item[(v)] $CB_A$ and $dCB_A$ are preserved under $A$-definable bijections.
\end{itemize}\end{prop}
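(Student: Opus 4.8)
\emph{Proof strategy.} Parts (iii), (iv) and (v) do not use smallness, and I would obtain them by a routine induction on the ordinal $\alpha$ occurring in the definition of the rank. For (iii): if $D_1\subseteq D_2$, any infinite family of disjoint $A$-definable subsets of $D_1$ is also one of $D_2$, so $CB_A(D_1)\geq\alpha$ implies $CB_A(D_2)\geq\alpha$ for all $\alpha$. In (iv) the inequality ``$\geq$'' is an instance of (iii); for ``$\leq$'' I would show by induction on $\alpha$ that $CB_A(D_1\cup D_2)\geq\alpha$ implies $CB_A(D_1)\geq\alpha$ or $CB_A(D_2)\geq\alpha$: writing each member $E_i$ of a witnessing family as $(E_i\cap D_1)\cup(E_i\cap D_2)$, the inductive hypothesis assigns each $E_i$ to one of the two sides, and the pigeonhole principle produces an infinite subfamily on a single side. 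For (v): an $A$-definable bijection sends $A$-definable sets to $A$-definable sets bijectively, preserving inclusion, the Boolean operations and disjointness, hence it preserves $CB_A$ and $dCB_A$ by the same kind of induction.

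For (i), suppose towards a contradiction that $CB_A(M)$ is not an ordinal, i.e.\ $CB_A(M)\geq\alpha$ for every ordinal $\alpha$. The crucial observation is that \emph{every $A$-definable set $E$ whose rank is not an ordinal contains two disjoint $A$-definable subsets whose ranks are again not ordinals}: since the $A$-definable subsets of $E$ form a set, the supremum $\lambda$ of those of their ranks that happen to be ordinals is itself an ordinal; as $CB_A(E)\geq\lambda+2$, there is an infinite family of disjoint $A$-definable subsets of $E$ of rank $\geq\lambda+1$, and maximality of $\lambda$ forces each of them to have non-ordinal rank. Iterating this builds a binary tree $(D_s)_{s\in 2^{<\omega}}$ of $A$-definable subsets of $M$, each of non-ordinal rank (hence non-empty), with $D_{s0},D_{s1}$ disjoint subsets of $D_s$. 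Every branch $\eta\in 2^\omega$ then yields a finitely satisfiable set of formulas $\{\,x\in D_{\eta\upharpoonright n}:n<\omega\,\}$, hence a complete type $p_\eta\in S_1(A)$; distinct branches separate at some level and so give distinct types, whence $|S_1(A)|\geq 2^{\aleph_0}$, contradicting smallness. The same argument shows in fact that \emph{every} $A$-definable set has ordinal rank, which I use below.

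For (ii), fix an $A$-definable $D$ and put $\alpha=CB_A(D)$, an ordinal by the above. The degree is well defined exactly when the set $\Delta$ of integers $m$ for which $D$ can be split into $m$ disjoint $A$-definable sets of rank $\alpha$ is bounded (note $1\in\Delta$). Suppose $\Delta$ is unbounded; then $D$ has the property $(\star)$ that for every $m$ it contains $m$ pairwise disjoint $A$-definable subsets of rank $\alpha$. I would then show: any $F$ of rank $\alpha$ with $(\star)$ splits as $F=F'\sqcup F''$ with $CB_A(F'')=\alpha$ and $F'$ of rank $\alpha$ still having $(\star)$. Indeed, split $F=A'\sqcup B'$ into two pieces of rank $\alpha$ (possible by $(\star)$); given a size-$m$ disjoint family of rank-$\alpha$ subsets of $F$, part (iv) places each member, up to rank $\alpha$, inside $A'$ or inside $B'$, and by pigeonhole one of $A'$, $B'$ receives at least $m/2$ of them; letting $m$ grow, that fixed side inherits $(\star)$ and is taken to be $F'$. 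Iterating from $C_0=D$ gives a decreasing chain $D=C_0\supseteq C_1\supseteq\cdots$ in which every $C_k$ and every $C_k\setminus C_{k+1}$ has rank $\alpha$, so $\{\,C_k\setminus C_{k+1}:k<\omega\,\}$ is an infinite disjoint family of rank-$\alpha$ subsets of $D$ and $CB_A(D)\geq\alpha+1$ --- absurd. Hence $\Delta$ is bounded and $dCB_A(D)=\max\Delta$.

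The one genuinely delicate point is this last step: in a general Boolean algebra ``arbitrarily large finite antichains'' need not produce an infinite one, so compactness is of no help and one really has to exploit the interplay with the rank through the pigeonhole-and-descend construction above. Everything else is bookkeeping with the definition of the rank, together with the standard fact that, $A$ being finite, smallness of $M$ makes $S_1(A)$ countable --- it embeds into $S_{1+|A|}(\emptyset)$.
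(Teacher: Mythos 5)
Your proposal is correct, and it is worth noting that the paper itself states this proposition without proof, evidently treating it as a package of standard facts about Cantor--Bendixson rank in a countable-type setting. Parts (iii)--(v) are, as you say, routine inductions on the ordinal. Your argument for (i) is the standard one: a definable set of non-ordinal rank must contain two disjoint $A$-definable subsets of non-ordinal rank (take $\lambda$ to be the supremum of the ordinal-valued ranks of $A$-definable subsets and use $CB_A(E)\geq\lambda+2$), and iterating yields a perfect binary tree of nonempty $A$-definable sets, hence $2^{\aleph_0}$ types over $A$; since $A$ is finite and $M$ is small, $S_1(A)$ is countable, contradiction. Your argument for (ii) is also the expected pigeonhole-and-descend construction: if arbitrarily large finite families of disjoint rank-$\alpha$ $A$-definable subsets exist, one manufactures an infinite such family by repeatedly splitting and keeping the half that still supports arbitrarily large families, contradicting $CB_A(D)=\alpha$.

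One small point you glide over but that deserves a sentence: the paper's definition of degree speaks of \emph{dividing} $D$ into $d$ disjoint $A$-definable pieces of full rank, whereas your $(\star)$ is phrased in terms of $D$ \emph{containing} $m$ pairwise disjoint such pieces. These are equivalent --- given $m$ pairwise disjoint rank-$\alpha$ subsets $G_1,\dots,G_m$ of $D$, replace $G_m$ by $D\setminus(G_1\cup\cdots\cup G_{m-1})$, which still has rank $\alpha$ since it contains $G_m$ --- but the equivalence should be stated, as it is exactly what lets you move back and forth between ``degree unbounded'' and your property $(\star)$. Also, in the descent step, ``that fixed side inherits $(\star)$'' is slightly loose: the side receiving $\geq m/2$ members may a priori depend on $m$, and you should say explicitly that by a second pigeonhole over $m$ one of the two sides works for infinitely many (hence, by monotonicity in $m$, all) values. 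Neither of these is a gap, just a matter of tightening the prose.
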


If $A$ is empty, we shall write $CB$ and $dCB$ rather than $CB_\emptyset$ or $dCB_\emptyset$.

\begin{rmk}\label{rmq}Let $H< G$ be $A$-definable small groups with $H\cap dcl(A)< G\cap dcl(A)$. Then, either $CB_A(H)<CB_A(G)$, or $CB_A(H)=CB_A(G)$ and $dCB_A(H)<dCB_A(G)$.\end{rmk}

\begin{cor}A small integral domain with unity is a field.\end{cor}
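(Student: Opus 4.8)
The plan is this: given a nonzero element $a$ of a small integral domain $R$ with $1$, I will show that the principal right ideal $aR$ is all of $R$. Once this is known, $1=ab$ for some $b\in R$; and in a domain a one-sided inverse is automatically two-sided, since $a(ba-1)=(ab)a-a=a-a=0$ forces $ba=1$ when $a\neq 0$. So every nonzero $a$ is a unit and $R$ is a (skew) field.

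First I fix $a\neq 0$ and work over the finite parameter set $A=\{a\}$. Because $R$ is a domain and $a\neq 0$, left translation $x\mapsto ax$ is an injective, hence bijective, $A$-definable map from the additive group $(R,+)$ onto its $A$-definable subgroup $aR$; iterating, the descending chain $R\supseteq aR\supseteq a^2R\supseteq\cdots$ is a chain of $A$-definable subgroups of $(R,+)$, each in $A$-definable bijection with $(R,+)$. By part (v) of the Proposition they all share the same $CB_A$-rank and $dCB_A$-degree; in particular $CB_A(aR)=CB_A(R)$ and $dCB_A(aR)=dCB_A(R)$, both ordinals by part (i) of the Proposition since $R$ is small.

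Now suppose, towards a contradiction, that $aR\neq R$. Then $aR<R$ is a proper inclusion of $A$-definable small subgroups of $(R,+)$ along which neither $CB_A$ nor $dCB_A$ drops, so Remark \ref{rmq} forbids $aR\cap dcl(A)<R\cap dcl(A)$; as the inclusion $aR\cap dcl(a)\subseteq R\cap dcl(a)$ is automatic, this yields $aR\cap dcl(a)=R\cap dcl(a)=dcl(a)$. Since $1$ is $\emptyset$-definable, $1\in dcl(a)\subseteq aR$, so $a$ is left-invertible and $aR=R$, contradicting $aR\neq R$. Hence $aR=R$ for every nonzero $a$, which finishes the proof.

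The argument is short, and the only real subtlety is the set-up for Remark \ref{rmq}: one must keep in mind that at this stage $dcl(a)$ is only known to be a unital subring of $R$ (not yet a subfield), and one must be careful that the bijection used to apply part (v) of the Proposition is definable precisely over $A=\{a\}$, so that the rank and degree invariants compared are the correct ones. Everything else is a direct application of the rank/degree machinery recorded in the Preliminaries.
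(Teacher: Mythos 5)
Your proof is correct and takes essentially the same approach as the paper: both observe that $x\mapsto ax$ is an $a$-definable bijection from $R$ onto $aR$, so $CB_a$ and $dCB_a$ cannot drop, and then use Remark~\ref{rmq} together with $1\in dcl(a)$ to force $1\in aR$. The paper's version is a terse three-line contradiction; yours spells out the contrapositive reading of the Remark and the elementary fact that a one-sided inverse in a domain is two-sided, but the underlying argument is identical.
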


\begin{proof}Let $R$ be this ring. If $r$ is not invertible, then $1\notin rR$ hence $rR\cap dcl(r) \lneq R\cap dcl(r)$. Apply Remark \ref{rmq}, but $R$ and $rR$ have same rank and degree over $r$.\end{proof}

Note that $R$ need not have a unity (see Corollary \ref{cor}). More generally, if $\varphi$ is a definable bijection between two definable groups $A\leq B$ in a small structure, then $A$ equals $B$.

\begin{prop}\emph{(Descending Chain Condition)} Let $G$ be a small group and $\overline{g}$ a finite tuple in $G$. Set $H=\langle\overline{g}\rangle$ or $H=dcl(\overline{g})$. In $H$, there is no strictly decreasing infinite chain of subgroups of the form $G_0\cap H>G_1\cap H>G_2\cap H>\cdots$, where the $G_i$ are $H$-definable subgroups of $G$.\end{prop}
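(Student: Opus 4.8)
The plan is to reduce the statement to the Cantor–Bendixson machinery already set up. Suppose toward a contradiction that we have an infinite strictly decreasing chain $G_0\cap H>G_1\cap H>G_2\cap H>\cdots$, where each $G_i$ is $H$-definable and $H=\langle\overline g\rangle$ or $H=dcl(\overline g)$. First I would observe that $H$ is small, being interpretable in the small group $G$ with finitely many parameters $\overline g$; and in the case $H=dcl(\overline g)$ it is literally $dcl$ of a finite set, while in the case $H=\langle\overline g\rangle$ it is contained in $dcl(\overline g)$, so in either case $H\cap dcl(\overline g)=H$. The point of Remark \ref{rmq} is that for $H$-definable subgroups $G_{i+1}\cap H\lneq G_i\cap H$ of the small group $G_i\cap H$, provided the inclusion is strict after intersecting with $dcl(\overline g)$, one gets a strict drop in the pair $(CB_{\overline g},dCB_{\overline g})$ under the lexicographic order.

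So the key step is to argue that each strict inclusion $G_{i+1}\cap H\lneq G_i\cap H$ remains strict when one intersects with $dcl(\overline g)$. But $G_i\cap H$ and $G_{i+1}\cap H$ are themselves subsets of $H$, and $H$ is contained in $dcl(\overline g)$ (indeed $H=dcl(\overline g)$ in one case, and $\langle\overline g\rangle\subseteq dcl(\overline g)$ in the other, since the group is $\emptyset$-definable in the ambient structure so sums/products/inverses of the $g_i$ lie in the definable closure), hence $(G_i\cap H)\cap dcl(\overline g)=G_i\cap H$ and likewise for $i+1$. Therefore the hypothesis of Remark \ref{rmq} is automatically met: applying it to $H'=G_{i+1}\cap H<G'=G_i\cap H$, viewed as $\overline g$-definable subgroups of $G$ (note $G_{i+1}\cap H=G_{i+1}\cap(G_i\cap H)$ is $\overline g$-definable inside $G_i\cap H$), we get that the pair $(CB_{\overline g}(G_i\cap H),dCB_{\overline g}(G_i\cap H))$ is strictly lex-decreasing in $i$.

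Then I would finish by noting that such a sequence cannot be infinite: by Proposition (i) each $CB_{\overline g}(G_i\cap H)$ is an ordinal (as $G_i\cap H\subseteq G\subseteq M$ and $\overline g$ is finite), and by (ii) each $dCB_{\overline g}$ is a well-defined positive integer, so $(CB_{\overline g},dCB_{\overline g})$ ranges in $\mathrm{Ord}\times\omega$ with the lexicographic order, which is a well-order; an infinite strictly descending sequence there is impossible. This contradiction establishes the descending chain condition.

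The main obstacle, and the place to be careful, is the bookkeeping around $dcl$: one must check that $\langle\overline g\rangle$ really sits inside $dcl(\overline g)$ so that intersecting the chain with $dcl(\overline g)$ does nothing and Remark \ref{rmq} applies verbatim — this uses that $G$ and its group operations are $\emptyset$-definable (or at least $\overline g$-definable) in the ambient small structure. One should also make sure the $G_i$ being "$H$-definable subgroups of $G$" is used correctly: a priori their defining parameters lie in $H\subseteq dcl(\overline g)$, so each $G_i\cap H$ is $\overline g$-definable, which is exactly what is needed to invoke the finitary Cantor–Bendixson ranks $CB_{\overline g}$ and $dCB_{\overline g}$ from the Proposition. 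Beyond that the argument is a routine well-foundedness appeal.
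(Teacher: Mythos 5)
Your overall strategy --- convert the chain into a lexicographically decreasing sequence of Cantor--Bendixson $(\text{rank},\text{degree})$ pairs via Remark~\ref{rmq}, then invoke well-foundedness of $\mathrm{Ord}\times\omega$ --- is exactly the paper's, and your auxiliary observation that $H\subseteq dcl(\overline{g})$ is the right one. But there is a genuine gap in how you apply Remark~\ref{rmq}. You apply it to $H'=G_{i+1}\cap H < G'=G_i\cap H$, ``viewed as $\overline{g}$-definable subgroups of $G$''. These are not $\overline{g}$-definable in general: $H$ is $\langle\overline{g}\rangle$ or $dcl(\overline{g})$, and neither of these need be a definable set, so $G_i\cap H$ need not be definable and $CB_{\overline{g}}(G_i\cap H)$ is simply undefined --- the whole $CB_A$/$dCB_A$ apparatus in the paper is set up only for $A$-definable sets. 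Your parenthetical that $G_{i+1}\cap H$ is ``$\overline{g}$-definable inside $G_i\cap H$'' (i.e.\ relatively definable) does not rescue this, since Remark~\ref{rmq} needs absolutely $A$-definable groups.

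The intended application is to the definable groups $G_i$ themselves. Replace $G_i$ by $G_0\cap\cdots\cap G_i$; this is still $H$-definable, hence $\overline{g}$-definable (your observation $H\subseteq dcl(\overline{g})$ gives that $H$-definable and $\overline{g}$-definable coincide, since parameters in $dcl(\overline{g})$ can be eliminated in favour of $\overline{g}$), and it has the same trace on $H$, so we may assume $G_0\geq G_1\geq\cdots$. The strict inclusion $G_{i+1}\cap H\lneq G_i\cap H$ is witnessed by an element of $H\subseteq dcl(\overline{g})$, hence $G_{i+1}\cap dcl(\overline{g})\lneq G_i\cap dcl(\overline{g})$. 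Now Remark~\ref{rmq} applies to $G_{i+1}<G_i$ and gives that $\bigl(CB_{\overline{g}}(G_i),dCB_{\overline{g}}(G_i)\bigr)$ is strictly lex-decreasing; your well-foundedness finish then closes the argument. In short: right idea and right key observation, but you pointed Remark~\ref{rmq} at the non-definable sets $G_i\cap H$ rather than at the definable sets $G_i$ whose $dcl(\overline{g})$-traces the remark actually controls.
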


\begin{proof}By Remark 5, either the rank or the degree decreases at each step.\end{proof}

\begin{cor}Let $G$ be a small group, $H<G$ a finitely generated subgroup of $G$, and $(G_i)_{i\in I}$ a family of $H$-definable subgroups of $G$. There is a finite subset $I_0\subset I$ such that $\displaystyle\bigcap_{i\in I}G_i\cap H=\bigcap_{i\in I_0}G_i\cap H$.\end{cor}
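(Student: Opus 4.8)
The plan is to deduce this immediately from the Descending Chain Condition. Since $H$ is finitely generated, write $H=\langle\overline{g}\rangle$ for a finite tuple $\overline{g}$ of $G$, so that the preceding Proposition applies with this $H$. The only structural fact I need is that a finite intersection $\bigcap_{i\in J}G_i$ of $H$-definable subgroups of $G$ is again an $H$-definable subgroup of $G$.

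Suppose for contradiction that the conclusion fails, i.e. that $\bigcap_{i\in I_0}G_i\cap H\neq\bigcap_{i\in I}G_i\cap H$ for every finite $I_0\subset I$. I would build recursively an increasing sequence $\emptyset=I_0\subset I_1\subset I_2\subset\cdots$ of finite subsets of $I$ with $\bigcap_{i\in I_{n+1}}G_i\cap H\lneq\bigcap_{i\in I_n}G_i\cap H$ for all $n$. At stage $n$, the subgroup $\bigcap_{i\in I_n}G_i\cap H$ is by assumption strictly larger than $\bigcap_{i\in I}G_i\cap H$, hence it cannot be contained in every $G_j$ ($j\in I$): otherwise it would be contained in $\bigcap_{i\in I}G_i\cap H$, forcing equality. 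Choosing such an index $j\in I$ with $\bigcap_{i\in I_n}G_i\cap H\not\subseteq G_j$ and setting $I_{n+1}=I_n\cup\{j\}$, we get that $\bigcap_{i\in I_{n+1}}G_i\cap H=\bigl(\bigcap_{i\in I_n}G_i\cap H\bigr)\cap G_j$ is a proper subgroup of $\bigcap_{i\in I_n}G_i\cap H$.

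By the structural observation each $\bigcap_{i\in I_n}G_i$ is an $H$-definable subgroup of $G$, so the chain $\bigcap_{i\in I_0}G_i\cap H>\bigcap_{i\in I_1}G_i\cap H>\bigcap_{i\in I_2}G_i\cap H>\cdots$ has exactly the shape forbidden by the Descending Chain Condition, a contradiction. Every step here is forced, so I expect no genuine obstacle; the only points to watch are that finite intersections of $H$-definable subgroups remain $H$-definable (so that the Proposition genuinely applies) and that one never gets stuck when trying to strictly shrink the current finite sub-intersection — which is precisely the negation of the statement we are assuming.
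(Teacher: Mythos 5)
Your argument is correct and is precisely the intended deduction from the Descending Chain Condition; the paper states this corollary without proof because it follows immediately. You correctly identify the one point that needs checking, namely that finite intersections of $H$-definable subgroups remain $H$-definable, so that the chain you build has exactly the form forbidden by the preceding Proposition.
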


Another chain condition on images of endomorphisms :

\begin{prop}\label{prop}Let $G$ be a small group and $h$ a group homomorphism of $G$. There exists some integer $n$ such that $\ima h^n$ equals $\ima h^{n+1}$.\end{prop}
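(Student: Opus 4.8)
The plan is to argue by contradiction. Applying $h$ to the equality $\ima h^n=\ima h^{n+1}$ yields $\ima h^{n+1}=\ima h^{n+2}$, so the chain $\ima h^0\supseteq\ima h\supseteq\ima h^2\supseteq\cdots$ either stabilises from some point on or is strictly decreasing; I assume the latter. Fix a finite set $A$ of parameters over which $h$ is definable, so that each $h^n$ and each $\ima h^n$ is $A$-definable.

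The crux is to find a single element $b$ whose whole $h$-orbit stays fresh, i.e. with $h^n(b)\notin\ima h^{n+1}$ for every $n\in\omega$; such a $b$ need not exist inside $G$, so I would obtain it by compactness. For each $N$, pick $c\in\ima h^N\setminus\ima h^{N+1}$ and a preimage $b_N$ with $h^N(b_N)=c$: were $h^n(b_N)\in\ima h^{n+1}$ for some $n\le N$, then applying $h^{N-n}$ would put $c$ in $\ima h^{N+1}$, a contradiction; hence $b_N$ realises $\{\,h^n(x)\notin\ima h^{n+1}:n\le N\,\}$. Therefore the partial type $p(x)=\{\,h^n(x)\notin\ima h^{n+1}:n\in\omega\,\}$ over $A$ is finitely satisfiable in $G$, so it is consistent, and in an $\aleph_0$-saturated elementary extension $G^{*}\succeq G$ it is realised by some $b$. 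Note $G^{*}$ is again a small group, $h$ extends to a definable endomorphism of it, and the chain $(\ima h^n)_n$ is still strictly decreasing there, since each strict inclusion is expressed by a first-order formula.

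It then remains to reach a contradiction inside $G^{*}$. Put $H=dcl(A\cup\{b\})$, the definable closure of a finite tuple, so that the Descending Chain Condition applies to $H$-definable subgroups of $G^{*}$. Each $\ima h^n$ is $A$-definable, hence $H$-definable, and $h^n(b)\in dcl(A\cup\{b\})=H$ belongs to $\ima h^n\cap H$ but not to $\ima h^{n+1}\cap H$; thus $(\ima h^n\cap H)_{n\in\omega}$ is an infinite strictly decreasing chain of subgroups of the forbidden form, contradicting the Descending Chain Condition. Hence the images stabilise in $G^{*}$, and as $\ima h^n=\ima h^{n+1}$ is first-order for each fixed $n$, they already stabilise in $G$. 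The one genuinely non-routine point is this passage to a saturated model, needed to secure the orbit-escaping element $b$; the rest is bookkeeping. (One could instead note that $h$ restricted to $\ima h^n$ is always onto $\ima h^{n+1}$, so that stabilisation amounts to injectivity of this restriction and could be handled via the remark that a small structure admits no definable bijection from a group onto a proper subgroup — but the non-injective case still forces the witness argument above, so nothing is gained.)
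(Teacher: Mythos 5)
Your proof is correct, and the first half of it coincides with the paper's: the partial type you write down, $p(x)=\{h^n(x)\notin\ima h^{n+1}:n\in\omega\}$, is exactly the paper's $\Phi(x)=\{x\notin h^{-n}\ima h^{n+1}:n\geq 1\}$ once the pullback is unwound, and the finite-satisfiability argument (take $c\in\ima h^N\setminus\ima h^{N+1}$, pull back to $b_N$, push forward to see no earlier escape is possible) is the same; both proofs then realise the type by a witness $b$ in a saturated model. Where you genuinely diverge is the endgame. The paper builds a full binary tree of cosets $h^{i_1}(b)\cdots h^{i_k}(b)\cdot\ima h^{n+1}$, uses $h^n(b)\notin\ima h^{n+1}$ to see that siblings are disjoint at every level, and concludes that $S_1(b)$ has cardinality $2^\omega$, contradicting smallness directly. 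You instead apply the Descending Chain Condition (the earlier proposition on chains $G_0\cap H>G_1\cap H>\cdots$) with $H=dcl(A,b)$ and $G_n=\ima h^n$, observing that $h^n(b)\in\ima h^n\cap H$ but $h^n(b)\notin\ima h^{n+1}$ makes every inclusion strict. Both routes are valid and non-circular, since the DCC precedes this proposition in the paper. The paper's version is self-contained and makes the type-count explicit; yours is shorter but leans on the Cantor--Bendixson rank machinery packaged inside Remark~\ref{rmq} via the DCC. The choice is essentially one of taste.
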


\begin{proof}Suppose that the chain $(\ima h^n)_{n\geq 1}$ be strictly decreasing. Consider the following tree $G(x)$\begin{center} $\ima h$ \\
\begin{picture}(100,15)
	\drawline(0,0)(50,15)
	\drawline(100,0)(50,15)
\end{picture}\\
\hspace{10pt}$\ima h^2$\hspace{120pt}$h(x).\ima h^2$\\
\begin{picture}(70,15)
	\drawline(0,0)(35,15)
	\drawline(70,0)(35,15)
\end{picture}\hspace{80pt}\begin{picture}(70,15)
	\drawline(0,0)(35,15)
	\drawline(70,0)(35,15)
\end{picture}\\
\hspace{30pt}$\ima.h^3$ \hspace{40pt} $h^2(x).\ima h^3$ \hspace{20pt} $h(x).\ima h^3$ \hspace{20pt} $h(x).h^2(x).\ima h^3$\\
\hspace{0pt}\vdots \hspace{80pt} \vdots \hspace{60pt} \vdots \hspace{80pt} \vdots\hspace{0pt}
\end{center}
Consider the partial type $\Phi(x):=\{x\notin h^{-n}\ima h^{n+1},\ n\geq 1\}$. The sequence $(h^{-n}\ima h^{n+1})_{n \geq 1}$ is increasing, and each set $G\setminus h^{-n}\ima h^{n+1}$ is non-empty, so $\Phi$ is finitely consistent. Let $b$ be a realization of $\Phi$ in a saturated model. The graph $G(b)$ has $2^\omega$ consistent branches, whence $S_1(b)$ has cardinal $2^\omega$, a contradiction with $G$ being small.\end{proof}

\begin{cor}\label{cor}Let $G$ be a small group and $h$ a group homomorphism of $G$. There exists some integer $n$ such that $G$ equals $\ker h^{n}\cdot \ima h^n$.\end{cor}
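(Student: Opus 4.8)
The plan is to feed Proposition \ref{prop} into a Fitting-type decomposition argument. First I would upgrade the conclusion of Proposition \ref{prop}: if $\ima h^n=\ima h^{n+1}$, then applying $h$ to both sides gives $\ima h^{n+1}=\ima h^{n+2}$, and by induction $\ima h^n=\ima h^m$ for every $m\geq n$. In particular $\ima h^n=\ima h^{2n}$, which is the form I actually want.

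Next, fix such an $n$ and take an arbitrary $g\in G$. Since $h^n(g)\in\ima h^n=\ima h^{2n}$, there is some $g'\in G$ with $h^n(g)=h^{2n}(g')=h^n\bigl(h^n(g')\bigr)$. Because $h^n$ is a group homomorphism, the element $g\cdot\bigl(h^n(g')\bigr)^{-1}$ then lies in $\ker h^n$. Writing $g=\bigl(g\cdot(h^n(g'))^{-1}\bigr)\cdot h^n(g')$ exhibits $g$ as a product of an element of $\ker h^n$ and an element of $\ima h^n$, so $G=\ker h^n\cdot\ima h^n$ as required.

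I do not expect a genuine obstacle here; the proof is essentially the classical Fitting decomposition with the descending-chain input replaced by Proposition \ref{prop}. The only points demanding a little care are (a) deriving the stronger stabilization $\ima h^n=\ima h^{2n}$ from the single-step equality, and (b) keeping the order of the factors correct since $G$ need not be abelian and $\ker h^n\cdot\ima h^n$ is here only claimed as a product of subsets, not as an internal direct product.
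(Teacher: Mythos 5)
Your proof is correct and is essentially the paper's own argument: the paper also sets $f=h^n$, uses $\ima f^2=\ima f$ (the stabilization you make explicit), and for $g\in G$ picks $g'$ with $f(g)=f^2(g')$ so that $gf(g')^{-1}\in\ker f$, giving $g\in\ker f\cdot\ima f$. Your explicit derivation of $\ima h^n=\ima h^{2n}$ from the single-step equality is a minor expansion of a step the paper takes as given.
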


\begin{proof}Take $n$ as in Proposition \ref{prop}, and set $f=h^n$. We have $\ima f^2=\ima f$, so for all $g\in G$ there exists $g'$ such that $f(g)=f^2(g')$. Hence $f(g f(g')^{-1})=1$ and $gf(g')^{-1}\in \ker f$, that is $g\in \ker f\cdot \ima f$.\end{proof}

It was shown in \cite{W2} that a definable injective homomorphism of a small group is surjective. Note that this follows again from Corollary \ref{cor}.\\

\section{Small skew fields}

Recall a result proved in \cite{W1} :

\begin{fact}\label{fact}An infinite small field is algebraically closed.\end{fact}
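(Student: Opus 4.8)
The plan is to show that an infinite small field $K$ is perfect and has no cyclic extension of prime degree; these two facts together force $K$ to be algebraically closed. Perfectness is automatic in characteristic $0$, and in characteristic $p$ the Frobenius $x\mapsto x^{p}$ is an $\emptyset$-definable injective endomorphism of the additive group $(K,+)$, hence surjective by Corollary \ref{cor} (or the remark following it), so $K=K^{p}$. Since every algebraic extension of a perfect field is perfect, and since every finite extension of $K$ is interpretable in $K$ and therefore again small, it is enough to prove that \emph{an infinite small perfect field has no cyclic extension of prime degree}: indeed, if $K$ had a proper finite (necessarily separable) extension, then, taking a Galois closure and walking down a composition series of its Galois group, one would produce --- after adjoining to $K$ a suitable finite extension, which keeps it small and perfect --- a cyclic extension of prime degree.

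So assume $K$ is infinite, small, perfect, and has a cyclic extension $L$ of prime degree $\ell$. If $\ell\neq\mathrm{char}\,K$ we may first adjoin a primitive $\ell$-th root of unity --- an extension of degree dividing $\ell-1$, hence still small and perfect --- and so assume $\mu_{\ell}\subseteq K$; by Kummer theory the existence of $L$ then amounts to $(K^{\times})^{\ell}\neq K^{\times}$. If $\ell=\mathrm{char}\,K=p$, then by Artin--Schreier theory it amounts to $\wp(K)\neq K$, where $\wp(x)=x^{p}-x$. In either case the relevant map --- $x\mapsto x^{\ell}$ on $K^{\times}$, or $\wp$ on $(K,+)$ --- is a definable group endomorphism whose $n$-th iterate has finite kernel (the $\ell^{n}$-th roots of unity, resp.\ the roots of $\wp^{n}$, these being the roots of a single polynomial); so by Proposition \ref{prop} and Corollary \ref{cor} its image, $(K^{\times})^{\ell}$ or $\wp(K)$, is a definable subgroup of finite index in $K^{\times}$ resp.\ in $(K,+)$.

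The crux is therefore to prove that neither the additive nor the multiplicative group of a small field has a proper definable subgroup of finite index. For $(K,+)$ this is manageable: multiplication by any $c\in K^{\times}$ carries a finite-index definable subgroup to another one of the same index, so the intersection $N$ of all finite-index definable subgroups is stable under multiplication by $K^{\times}$, i.e.\ is an ideal, whence $N=0$ or $N=K$; and $N=0$ is impossible because, by the corollary to the descending chain condition, the trace of $N$ on a finitely generated subfield $dcl(\bar b)$ equals the trace there of a single $\bar b$-definable finite-index subgroup, so $dcl(\bar b)$ would embed into a finite quotient of $(K,+)$ and hence be finite --- absurd once $\bar b$ contains an element transcendental over the prime field (and already for $\bar b=\emptyset$ in characteristic $0$), the case when $K$ is algebraic over its prime field being treated by a separate and more direct argument showing such a small field must be $\overline{\mathbb{F}_{p}}$. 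Hence $\wp(K)=K$, killing the Artin--Schreier case.

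The multiplicative case is the main obstacle, since $K^{\times}$ has no analogous action shuffling its finite-index subgroups around. The starting point is the classical remark that a finite-index subgroup $H$ of $K^{\times}$ satisfies $\sum H=K$: the set of finite sums of elements of $H$ is a subring, and even a subfield because $h^{-1}=h^{\,[K^{\times}:H]-1}\cdot\big(h^{[K^{\times}:H]}\big)^{-1}$ lies in it; its multiplicative group contains $H$, hence has finite index in $K^{\times}$, which by the ``a vector space over an infinite field is not a finite union of proper subspaces'' lemma forces it to be all of $K$. So $H$ generates $K$ additively. The plan is to combine this with the Cantor--Bendixson rank and degree inequalities of Remark \ref{rmq}, applied to the chain $K^{\times}>(K^{\times})^{\ell}>(K^{\times})^{\ell^{2}}>\cdots$, which stabilises by Proposition \ref{prop}: a proper finite-index $(K^{\times})^{\ell}$ would force a drop of rank or degree that additive generation should preclude, and I expect making this incompatibility precise to be the delicate heart of the argument. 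Granting both connectedness statements, $(K^{\times})^{\ell}=K^{\times}$ and $\wp(K)=K$, so $K$ has no cyclic extension of prime degree and, being perfect, is separably and hence algebraically closed.
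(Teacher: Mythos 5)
The paper does not actually prove Fact~\ref{fact}; it is cited directly from Wagner's \emph{Small fields} \cite{W1}, so there is no in-paper argument to compare against --- your attempt has to stand on its own. The overall framework you chose is the standard one and several pieces are in order: Frobenius is surjective by Corollary~\ref{cor} (an injective definable endomorphism of a small group is surjective), so $K$ is perfect; the reduction to ``no cyclic extension of prime degree'' is valid, though the ``walking down a composition series'' phrasing is slightly off (composition factors of the Galois group need not be cyclic; what one really uses is Cauchy's theorem to pick a subgroup of prime order and pass to its fixed field); and adjoining $\mu_\ell$ preserves smallness and does not collapse the degree-$\ell$ extension since $\gcd(\ell,\ell-1)=1$, so the Kummer/Artin--Schreier dichotomy is correctly set up, with Proposition~\ref{prop} and Corollary~\ref{cor} giving that $(K^\times)^\ell$ and $\wp(K)$ have finite index.

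The proof, however, has a genuine hole exactly where you flag it. For multiplicative connectedness you correctly observe that a finite-index subgroup $H\leq K^\times$ additively generates $K$, and you then gesture at combining this with Remark~\ref{rmq} applied to the chain $K^\times>(K^\times)^\ell>(K^\times)^{\ell^2}>\cdots$, ``expecting a rank or degree drop that additive generation should preclude'' --- but you never say what the contradiction is, and I don't see one: Proposition~\ref{prop} already forces this chain to stabilize after finitely many steps, so Remark~\ref{rmq} only produces finitely many drops of rank or degree, which is perfectly compatible with $(K^\times)^\ell$ being a proper subgroup of finite index. Note also that the route this paper later takes to multiplicative connectedness (Proposition~\ref{con} via divisibility, Proposition~\ref{root}) is unavailable to you here, since Proposition~\ref{root} itself invokes Fact~\ref{fact}; so you genuinely need an independent argument, and none is given. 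A second gap: you set aside the case where $K$ is algebraic over $\mathbb{F}_p$ as ``treated by a separate and more direct argument showing such a small field must be $\overline{\mathbb{F}_p}$,'' but no such argument appears, and it is not automatic --- there are infinite proper subfields of $\overline{\mathbb{F}_p}$, and one must actually show none of them is small. This case also undercuts your additive argument as written, which needs an infinite finitely generated $dcl(\bar b)$ and hence a transcendental, unavailable when $K$ is locally finite. As it stands the proposal is an accurate road map with the hardest leg missing, not a proof.
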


Let $D$ be an infinite small skew field. We begin by analysing elements of finite order.

\begin{lem}\label{lem}Let $a \in D$ an element of order $n<\omega$. Then $a$ is central in $D$.\end{lem}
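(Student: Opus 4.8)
The plan is: assuming $a$ is \emph{not} central, I would first produce an element $x\in D^{*}$ conjugating $a$ to a different Galois conjugate of $a$ over the prime field, and then reach a contradiction by passing to the centralizer of a suitable power of $x$ and invoking Fact~\ref{fact}. Two preliminary reductions are harmless. Writing the order of $a$ as $p^{k}m$ with $p\nmid m$, the Frobenius identity $(a^{m}-1)^{p^{k}}=(a^{m})^{p^{k}}-1=0$ together with the absence of zero divisors forces $a^{m}=1$, so we may assume $\gcd(n,p)=1$. Then $\mathbb{F}_{p}[a]$ is a finite subring of the domain $D$, hence a finite field $F=\mathbb{F}_{q}$ with $a\in F^{*}$ and $a^{q}=a$; I will also use the (Wedderburn) fact that every finite subgroup of $D^{*}$ spans a finite subring, necessarily a finite field, so such subgroups are cyclic, in particular abelian.

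The technical heart is the inner derivation $\delta\colon x\mapsto ax-xa$. It is an additive endomorphism of $(D,+)$, and it is left $F$-linear since $F$ centralizes $a$; as left and right multiplication by $a$ commute, characteristic $p$ gives $\delta^{q}=L_{a}^{q}-R_{a}^{q}=L_{a^{q}}-R_{a^{q}}=\delta$, so $\delta$ annihilates the separable polynomial $X^{q}-X=\prod_{c\in F}(X-c)$. Via the Lagrange idempotents $\prod_{c'\neq c}(\delta-c')/(c-c')$ this yields, even in infinite dimension, an eigenspace decomposition $D=\bigoplus_{c\in F}D_{c}$ with $D_{c}=\{x:ax-xa=cx\}$ and $D_{0}=C_{D}(a)$. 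If $a$ is not central, some $D_{c}$ with $c\neq 0$ is non-trivial; for $0\neq x\in D_{c}$ one computes $xax^{-1}=a-c\in F$, whence $x$ normalizes $F=\mathbb{F}_{p}[a]$ (the image $\mathbb{F}_{p}[xax^{-1}]$ is a subfield of $F$ of the same cardinality), conjugation by $x$ induces an $\mathbb{F}_{p}$-automorphism of $F$, i.e.\ a power of Frobenius, and hence $xax^{-1}=a^{p^{j}}\neq a$; moreover $x^{m}$ centralizes $F$, hence $a$, where $m\geq 2$ is the order of that automorphism.

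For the conclusion: since $\langle a,x\rangle$ is non-abelian it cannot be finite, and as $a$ is torsion this forces $x$, and hence $b:=x^{m}$, to have infinite order, so $b$ is transcendental over $\mathbb{F}_{p}$. Now $b\in C_{D}(a)$ commutes with $a$, while $x$ commutes with $b=x^{m}$, so both $a$ and $x$ lie in $C_{D}(b)$ and $b\in Z(C_{D}(b))$. Thus $Z(C_{D}(b))$ is an infinite definable commutative subfield of $D$, hence by Fact~\ref{fact} algebraically closed; since $a$ is a root of $X^{n}-1$ over it and commutes with it, $a\in Z(C_{D}(b))$, i.e.\ $a$ is central in $C_{D}(b)$. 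But $x\in C_{D}(b)$ and $xax^{-1}=a^{p^{j}}\neq a$, a contradiction. Therefore $a$ is central.

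I expect the main obstacle to be the derivation step: making sure the eigenspace decomposition of $\delta$ really is available in infinite dimension, and above all that the conjugating element $x$ extracted from a non-zero eigenspace $D_{c}$ ($c\neq 0$) genuinely realizes a \emph{nontrivial} power of Frobenius on $F$ (so that $a$ acquires, inside the smaller division ring $C_{D}(b)$, a witness to non-centrality). Once that is in place the final contradiction is a short chase using only Fact~\ref{fact}, and no splitting according to whether $Z(D)$ is finite is needed.
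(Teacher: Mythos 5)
Your proof is correct for positive characteristic, and it is genuinely different from the paper's in two respects. Where the paper invokes Herstein's Lemma 3.1.1 as a black box to produce $x$ with $xax^{-1}=a^i\neq a$, you re-derive that fact from scratch via the inner derivation $\delta=L_a-R_a$: using $a^q=a$ and characteristic $p$ to get $\delta^q=\delta$, hence (by Lagrange idempotents, valid in any dimension since the factors of $T^q-T$ are coprime) the eigenspace decomposition $D=\bigoplus_{c\in F}D_c$, and then a nonzero $x\in D_c$ ($c\neq 0$) gives $xax^{-1}=a-c$, which you correctly upgrade to a nontrivial power of Frobenius on $F$. Your endgame is also cleaner: rather than the paper's fixed field $L^x\subsetneq L=Z(C_D(a,x^m))$ (where the paper even has a small slip, claiming $x\in L^x$ when it should be $x^m\in L^x$), you work directly with $Z(C_D(b))$ for $b=x^m$; since $b$ is transcendental this field is infinite, definable, hence algebraically closed by Fact~\ref{fact}, so the root of unity $a$ (which commutes with it) must lie in it — contradicting $xax^{-1}\neq a$ with $x\in C_D(b)$. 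Both proofs share the same skeleton (conjugate $a$ nontrivially inside $F$, rule out torsion $x$ by cyclicity of finite subgroups in characteristic $p$, pass to $x^m$ centralizing $a$, then invoke algebraic closedness), so the gain is self-containedness and a shorter final step, at the cost of carrying the eigenspace machinery. One genuine (though minor) gap: the lemma as stated covers characteristic zero as well, which the paper dispatches in one line via $\mathbb{Q}\subseteq Z(D)$ infinite and algebraically closed; your argument assumes characteristic $p$ from the first sentence, so you should add that easy case or restrict the hypothesis.
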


\begin{proof}Either $D$ has zero characteristic, so $Z(D)$ is infinite, hence algebraically closed. But $Z(D)(a)$ is an extension of $Z(D)$ of degree $d\leq n$, whence $a\in Z(D)$.\\
Or $D$ has positive characteristic. Suppose that $a$ is not central, then \cite[Lemma 3.1.1 p.70]{Her} there exists $x$ in $D$ such that $xax^{-1}=a^i\neq a$. If $x$ has finite order, then all elements in the multiplicative group $\langle x,a\rangle$ have finite order. Hence $\langle x,a\rangle$ is commutative \cite[Lemma 3.1.3 p.72]{Her}, contradicting $xax^{-1}\neq a$. So $x$ has infinite order. Conjugating $m$ times by $x$, we get $x^m ax^{-m}=a^{i^m}$. But $a$ and $a^i$ have same order $n$, with $gcd(n,i)=1$. Put $m=\phi(n)$. By Fermat's Theorem, $i^m\equiv 1 [n]$, so $x^m$ and $a$ commute. Then $L=Z(C_D(a,x^m))$ is a definable infinite commutative subfield of $D$ which contains $a$. Let $L^{x}$ be $\{l\in L,\ x^{-1}lx=l\}$. This is a proper subfield of $L$. Moreover $1<[L^x(a):L^x]\leq n$. But $L^x$ is infinite as it contains $x$. By Fact \ref{fact}, it is algebraically closed and cannot have a proper extension of finite degree.\end{proof}

\begin{prop}\label{root}Every element of $D$ has a $n^\text{th}$ root for each $n\in\omega$.\end{prop}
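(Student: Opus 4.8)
The plan is to exhibit, for a given $a\in D$, an infinite \emph{definable} commutative subfield $K$ of $D$ with $a\in K$. Once this is done we are finished: $K$ is interpretable in $D$ after adjoining finitely many parameters, so it is an infinite small field, hence algebraically closed by Fact \ref{fact}, and the polynomial $Y^n-a\in K[Y]$ then has a root in $K\subseteq D$, which is the required $n^\text{th}$ root of $a$. (If $a=0$ it is its own $n^\text{th}$ root, so we assume $a\neq 0$.)

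The subfield $K$ will be of the form $Z(C_D(g))$, the centre of the centraliser of a well-chosen $g\in D^\times$: for any $g$, this set is definable over $g$, is a commutative subfield of $D$, and contains $g$ (as $g$ commutes with every element of $C_D(g)$ by definition of the centraliser). Moreover $Z(C_D(g))$ is infinite as soon as $g$ has infinite multiplicative order: such a $g$ is transcendental over the prime field, because an element algebraic over $\mathbb{F}_p$ lies in a finite field and is therefore a root of unity, so $Z(C_D(g))\supseteq\mathbb{F}_p(g)$ is infinite; in characteristic $0$ the prime field alone already makes $Z(C_D(g))$ infinite.

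It then remains to choose $g$ so that $a\in Z(C_D(g))$. If $a$ itself has infinite order, take $g=a$. If $a$ has finite order, then $a$ is central by Lemma \ref{lem}, and it suffices to find \emph{some} $g\in D^\times$ of infinite order: indeed $Z(D)\subseteq C_D(g)$ and every element of $Z(D)$ commutes with all of $C_D(g)$, so $a\in Z(D)\subseteq Z(C_D(g))$. Finally, if $D^\times$ contains no element of infinite order at all, then every element of $D^\times$ has finite order and hence is central by Lemma \ref{lem}; thus $D=Z(D)$ is itself an infinite commutative field and we simply take $K=D$.

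I expect the main obstacle to be precisely the double requirement on $K$ — that it be definable, so that smallness together with Fact \ref{fact} can be invoked, and simultaneously infinite — with the all-torsion case being the one needing care; this last case is disposed of neatly by Lemma \ref{lem}, which collapses $D$ onto its centre. The remaining verifications (that $Z(C_D(g))$ is definable, is a commutative subfield containing $g$, and inherits smallness, the latter from smallness being preserved under interpretation and under adding the parameter $g$) are routine.
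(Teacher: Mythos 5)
Your proposal is correct and follows essentially the same route as the paper: in each case one exhibits an infinite definable commutative subfield containing $a$ (of the form $Z(C_D(g))$) and invokes Fact~\ref{fact} to conclude it is algebraically closed. You are in fact slightly more careful than the paper's own proof, which simply posits an element $x$ of infinite order in the torsion case without arguing one exists; your fallback (if $D^\times$ is all torsion, then by Lemma~\ref{lem} $D=Z(D)$ is already a commutative infinite small field) cleanly closes that gap, as does your explicit justification that $Z(C_D(g))$ is infinite when $g$ has infinite order.
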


\begin{proof}Let $a\in D$. If $a$ has infinite order, $Z(C_D(a))$ is an infinite commutative definable subfield of $D$. Hence it is algebraically closed, and $a$ has an $n^\text{th}$  root in $Z(C_D(a))$. Otherwise $a$ has finite order. According to Lemma \ref{lem} it is central in $D$. Let $x\in D$ have infinite order. Then $a\in Z(C_D(a,x))$, a commutative, infinite, definable, and thus algebraically closed field.\end{proof}

\begin{rmk} Note that since $D^\times$ is divisible, it has elements of arbitrary large finite order, which are central by Lemma \ref{lem}. Taking $D$ omega-saturated, we can suppose $Z(D)$ infinite.\end{rmk}

Let us now show that a small skew field is \textit{connected}, that is to say, has no definable proper subgroup of finite index.

\begin{prop}\label{con}$D$ is connected.\end{prop}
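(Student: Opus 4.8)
The plan is to argue that $D$ has no proper definable subgroup of finite index, working separately on the additive group $(D,+)$ and the multiplicative group $(D^\times,\cdot)$. First I would treat the additive group. Suppose $H$ is an $\emptyset$-definable (after adding finitely many parameters, say the coefficients needed) subgroup of $(D,+)$ of finite index $k$. Then for every $d\in D^\times$, the set $dH=\{dh:h\in H\}$ is again an additive subgroup of index $k$, and likewise $Hd$. The finite-index condition means $D/H$ is a finite abelian group of bounded exponent; in particular, since $D$ has positive characteristic $p$ (this is where we use the hypothesis of the whole paper), the characteristic forces $D/H$ to be a finite-dimensional $\mathbb{F}_p$-vector space, so $pD\subseteq$ (the relevant subgroup) and one can take the intersection $\bigcap_{d\in D^\times} dH$, which is a left-ideal; by the chain condition in Corollary (the descending chain condition on definable subgroups, applied inside $dcl$ of the finitely many parameters) this intersection is itself definable and of finite index. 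A nonzero left ideal of a skew field is everything, so the intersection is $D$, forcing $H=D$.

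More carefully, here is the cleaner route I would actually write. Let $H\leq (D,+)$ be definable of finite index. The map $x\mapsto px$ is an additive endomorphism of $D$; since $\operatorname{char} D=p$ it is identically zero, so every element is $p$-torsion and $D/H$ has exponent $p$. Now consider, for fixed $a\in D^\times$, the endomorphism $h_a:x\mapsto ax$ of $(D,+)$: it is bijective (as $a$ is invertible), so $h_a(H)=aH$ is again of finite index, in fact of the same index as $H$. The intersection $N=\bigcap_{a\in D^\times}aH$ is an intersection of a family of subgroups each definable over a fixed finite tuple (the parameters defining $H$ together with $a$, but uniformly), hence by the Corollary to the Descending Chain Condition it equals a finite subintersection, so $N$ is definable; and being a finite intersection of finite-index subgroups, $N$ has finite index. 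But $N$ is a left ideal of $D$: if $x\in N$ and $b\in D$, then for every $a$, $a(bx)=(ab)x\in H$ because $bx\in abH$... — the bookkeeping here is that $N$ is visibly closed under left multiplication by $D^\times$ and hence by all of $D$. A left ideal of a skew field is $\{0\}$ or $D$; since $N$ has finite index in the infinite group $D$, we get $N=D$, so $H=D$. This handles $(D,+)$.

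For the multiplicative group $D^\times$, I would use that $D^\times$ is divisible (Proposition \ref{root}): a divisible abelian group, indeed any divisible group, has no proper subgroup of finite index, because the quotient would be a nontrivial finite divisible group, which is impossible. So no extra work is needed there beyond citing Proposition \ref{root}. Putting the two pieces together gives that $D$ is connected in both its group structures, which is what "connected" should mean here.

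The main obstacle I anticipate is the additive case: one must be sure the family $\{aH : a\in D^\times\}$ genuinely falls under the Corollary to the Descending Chain Condition — i.e. that all these subgroups are definable over a \emph{single} finitely generated (or $dcl$ of finite) set of parameters, so that the corollary applies and $N$ is definable — and then that $N$ really is a two-sided-enough ideal to invoke that a skew field has no proper nonzero left ideal. A secondary point to get right is the reduction showing $D/H$ has exponent $p$, which is where the positive-characteristic hypothesis (and hence the restriction in the abstract) is actually used; in characteristic zero this argument breaks, consistently with the paper only claiming the result in positive characteristic.
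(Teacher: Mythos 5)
Your treatment of the multiplicative part is exactly the paper's: $D^\times$ is divisible by Proposition~\ref{root}, so it has no proper finite-index subgroup. The additive part, however, has a genuine gap where you invoke the chain condition. You form $N=\bigcap_{a\in D^\times}aH$ over \emph{all} of $D^\times$ and claim that by the Corollary to the Descending Chain Condition this ``equals a finite subintersection, so $N$ is definable \dots and has finite index.'' But the Corollary does not say that: it only asserts that for a \emph{finitely generated} $H_0=\langle\overline g\rangle$ or $dcl(\overline g)$, the trace $\bigcap_{i}G_i\cap H_0$ stabilizes at a finite subintersection. It gives no control over the global intersection $\bigcap_i G_i$ itself. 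What you are implicitly using is a Baldwin--Saxl-type uniform chain condition on intersections of uniformly definable subgroups, which is available in stable groups but is not established (and is not claimed) for small groups in this paper. Without it, $N$ could a priori be $\{0\}$, and your ``left ideal of a skew field'' dichotomy then gives nothing. The paper sidesteps exactly this by working locally: fix an infinite finitely generated subfield $k$ (containing the parameters of $H$), apply the chain condition to the family $\{dH : d\in k\}$ intersected with $k$, obtain a finite intersection $G$ with $G\cap k$ a nonzero left ideal of $k$ of finite index, conclude $G\cap k=k$, hence $H\cap k=k$; since this holds for every such $k$, $H=D$. You would need to insert this localization step to make your argument sound.

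A secondary point: you attribute the positive-characteristic hypothesis to this proposition (``$D/H$ has exponent $p$''), but that observation is never used in your own intersection argument, and the paper's proof of connectedness works in \emph{every} characteristic (in characteristic $0$ it is immediate from additive divisibility; the chain-condition argument covers the rest). The restriction to positive characteristic in the paper's main theorem comes from the final step (showing $x\mapsto ax-xa$ is not surjective via $a^pxa^{-p}=x+p=x$), not from connectedness.
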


\begin{proof}Multiplicatively : By Proposition \ref{root}, $D^\times$ is divisible so has no subgroup of finite index. Additively : Let $H$ be a definable subgroup of $D^+$ of finite index $n$. In zero characteristic, $D^+$ is divisible, so $n=1$. In general, let $k$  be an infinite finitely generated subfield of $D$. Consider a finite intersection $G=\displaystyle\bigcap_{i\in I} d_iH$ of left translates of $H$ by elements in $k$ such that $G\cap k$ is minimal ; this exists by the chain condition. By minimality, $G\cap k=\displaystyle\bigcap_{d \in k} dH \cap k$, so $G\cap k$ is a left ideal of $k$. Furthermore, $G$ is a finite intersection of subgroups of finite index in $D^+$ ; it has therefore finite index in $D$. Thus $G\cap k$ has finite index in $D\cap k=k$, and cannot be trivial, so $G\cap k=k=H\cap k$. This holds for every infinite finitely generated $k$, whence $H=D$.\end{proof}

Now we look at elements of infinite order.

\begin{lem} $a\in D$ have infinite order. Then $C_D(a)=C_D(a^n)$ for all $n>0$.\end{lem}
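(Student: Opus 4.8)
The plan is to show the non-trivial inclusion: the inclusion $C_D(a^n)\supseteq C_D(a)$ is immediate, since anything commuting with $a$ commutes with $a^n$, so I only need $C_D(a^n)\subseteq C_D(a)$, i.e. that $a$ is central in the sub-skew-field $E:=C_D(a^n)$.

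First I would record the features of $E$ that matter. As the centralizer of a single element of a division ring, $E$ is again a skew field, and it is definable over the parameter $a^n$, hence small. It contains $Z(D)$, which after passing to an $\omega$-saturated model (cf. the Remark following Proposition \ref{con}) is infinite; since every element of $Z(D)$ commutes with everything in $E\subseteq D$, we get $Z(D)\subseteq Z(E)$, so the centre $Z(E)$ is an infinite commutative field, definable over $a^n$ and therefore small. By Fact \ref{fact}, $Z(E)$ is algebraically closed.

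Next comes the key point: $a$ is algebraic of bounded degree over this algebraically closed field. Indeed $a\in E$, and by the very definition of $E=C_D(a^n)$ the element $a^n$ lies in $Z(E)$; thus $a$ is a root of $X^n-a^n\in Z(E)[X]$. Since $a$ commutes with $Z(E)$, the ring $Z(E)[a]$ is a commutative domain spanned over $Z(E)$ by $1,a,\dots,a^{n-1}$, hence a finite field extension of $Z(E)$. An algebraically closed field has no proper finite extension, so $Z(E)[a]=Z(E)$, i.e. $a\in Z(E)$. Then $a$ commutes with every element of $E=C_D(a^n)$, which is exactly $C_D(a^n)\subseteq C_D(a)$, and the proof is complete.

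I do not expect a serious obstacle here; the delicate points are the soft ones, namely that $E$ and $Z(E)$ are small (immediate from definability) and, above all, that $Z(E)$ is \emph{infinite}, so that Fact \ref{fact} can be invoked — this is where the reduction to an $\omega$-saturated model, with $Z(D)$ infinite, is used, and it is also where smallness is genuinely needed: over the real quaternions the analogous statement fails for $a=2i$ precisely because the relevant centre $\mathbb{R}$ is not algebraically closed.
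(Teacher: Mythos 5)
Your proof is correct and follows essentially the same route as the paper: set $L=Z(C_D(a^n))$, argue it is an infinite small commutative field hence algebraically closed by Fact~\ref{fact}, and conclude from the finite commutative extension $L(a)/L$ (via $X^n-a^n$) that $a\in L$, i.e. $C_D(a^n)\subseteq C_D(a)$. One minor simplification: you do not need the $\omega$-saturation reduction to see that $Z(C_D(a^n))$ is infinite, since it already contains $a^n$, which has infinite order by hypothesis.
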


\begin{proof}Clearly $C_D(a)\leq C_D(a^n)$. Consider $L=Z(C_D(a^n))$. It is algebraically closed by Fact \ref{fact}, but $L(a)$ is a finite commutative extension of $L$, whence $a\in L$ and $C_D(a^n)\leq C_D(a)$.\end{proof}

Now suppose that $D$ is not commutative. We shall look for a commutative centralizer $C$ and show that the dimension $[D:C]$ is finite. This will yield a contradiction.

\begin{lem}\label{plus}Let $a\in D$, $t\in D\setminus im(x\mapsto ax-xa)$ and $\varphi: x \mapsto t^{-1}\cdot(a x - x a)$. Then $D=im\varphi\oplus ker\varphi$. Moreover, if $k=dcl(a,t,\overline{x})$, where $\overline{x}$ is a finite tuple, then $k=im\varphi\cap k\oplus ker\varphi\cap k$.\end{lem}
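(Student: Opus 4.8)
The plan is to treat $\varphi$ as a definable (over $\{a,t\}$) additive endomorphism of $D^+$: it is the composite of the inner derivation $\delta\colon x\mapsto ax-xa$ with left multiplication by $t^{-1}$, both group homomorphisms of $D^+$. Thus $\ker\varphi=\ker\delta=C_D(a)$ and $\ima\varphi=t^{-1}\ima\delta$, and the hypothesis $t\notin\ima\delta$ is precisely the assertion $1\notin\ima\varphi$. The splitting will be deduced from the chain conditions of Section~1 applied to $\varphi$.

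The first and main step is to show $\ker\varphi\cap\ima\varphi=0$. Suppose $y$ lies in the intersection with $y\neq0$. Then $y\in C_D(a)$, hence $y^{-1}\in C_D(a)$ and $\delta(y^{-1})=0$; writing $ty=\delta(x)$ for some $x$ and using the Leibniz rule for $\delta$, we get $t=\delta(x)y^{-1}=\delta(xy^{-1})\in\ima\delta$, contradicting the hypothesis. So $y=0$. This is where --- and essentially the only place where --- the division-ring structure is used, through invertibility inside the centralizer $C_D(a)$ together with the derivation identity.

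The rest is soft. From $\ker\varphi\cap\ima\varphi=0$ one gets $\ker\varphi^2=\ker\varphi$ (if $\varphi^2(z)=0$ then $\varphi(z)\in\ker\varphi\cap\ima\varphi=0$), hence $\ker\varphi^n=\ker\varphi$ for all $n$. Now Corollary~\ref{cor}, applied to the small group $D^+$ and the homomorphism $\varphi$, yields an $n$ with $D=\ker\varphi^n+\ima\varphi^n=\ker\varphi+\ima\varphi^n\subseteq\ker\varphi+\ima\varphi\subseteq D$, so $D=\ker\varphi+\ima\varphi$. Together with the vanishing intersection, this is the decomposition $D=\ima\varphi\oplus\ker\varphi$. (Alternatively, once the intersection is trivial, $\varphi$ restricts to an injective definable endomorphism of the small group $\ima\varphi$, hence a surjective one, which again gives the splitting.)

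For the "moreover" part, the point is that since $\ima\varphi$ is $\{a,t\}$-definable, $\ker\varphi=C_D(a)$ is $\{a\}$-definable, and the sum $D=\ima\varphi\oplus\ker\varphi$ is direct, the projection $\pi\colon D\to\ker\varphi$ along $\ima\varphi$ --- whose graph is $\{(d,y):d-y\in\ima\varphi,\ y\in\ker\varphi\}$ --- is a function definable over $\{a,t\}$. Hence if $y\in k=dcl(a,t,\overline{x})$ then $\pi(y)\in dcl(a,t,y)\subseteq k$ and $y-\pi(y)\in k$, so $y=(y-\pi(y))+\pi(y)$ puts $y$ in $(\ima\varphi\cap k)+(\ker\varphi\cap k)$; and the intersection of these two subgroups of $k$ is contained in $\ima\varphi\cap\ker\varphi=0$. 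The only subtlety is that $k$ need not be a definable set, so one cannot simply rerun the group-theoretic arguments inside $k$; intersecting the already-established decomposition of $D$ with $k$ through the definable projection circumvents this.
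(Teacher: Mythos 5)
Your proof is correct and follows essentially the same route as the paper's: both rest on showing $\ker\varphi\cap\ima\varphi=0$ via the right-$C_D(a)$-module structure of $\ima\varphi$ (your Leibniz-rule computation is exactly this) together with the choice of $t$, and then invoke Corollary~\ref{cor} to get $D=\ker\varphi+\ima\varphi$, closing with the observation that the projections onto the two summands are $\{a,t\}$-definable. The only (cosmetic) difference is in the middle step --- the paper passes to the induced injective endomorphism of $D^+/\ker\varphi$ and concludes surjectivity, whereas you note $\ker\varphi^n=\ker\varphi$ and apply Corollary~\ref{cor} directly to $\varphi$ on $D^+$; these are two phrasings of the same use of the corollary.
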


\begin{proof}Let $K=ker\varphi=C_D(a)$. Put $I=im\varphi$; this is a right $K-$vector space, so $I\cap K=\{0\}$, since $1\in K\cap I$ is impossible by the choice of $t$. Consider the morphism
$$\tilde{\varphi}\ :\  \begin{tabular}{ccc} $D^+/K$ & $\longrightarrow$ &
$D^+/K$\\
$x$ & $\longmapsto$ & $\varphi(x)$
\end{tabular}$$
$\tilde{\varphi}$ is an embedding, and $D^+/K$ is small ; by Corollary 10, $\tilde{\varphi}$ is surjective and $D/K=\tilde{\varphi}(D/K)$, hence $D=I\oplus K$. Now, let $k=dcl(a,t,\overline{x})$ where $\overline{x}$ is a finite tuple of parameters in $D$. $I$ and $K$ are $k-$definable. For all $\alpha\in k$ there exists a unique couple $(\alpha_1,\alpha_2)\in I\times K$ such that $\alpha=\alpha_1+\alpha_2$, so $\alpha_1$ and $\alpha_2$ belong to $dcl(\alpha,a,t)\leq k$, that is to say $k=I\cap k\oplus K\cap k$.\end{proof}

\begin{lem}\label{onto}For every $a\notin Z(D)$, the map $\varphi_a:x\mapsto ax-xa$ is onto.\end{lem}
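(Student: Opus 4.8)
The plan is to reduce the surjectivity of $\varphi_a$ to the situation of Lemma~\ref{plus}, where a suitable scalar $t$ was needed only to guarantee $1\notin\operatorname{im}\varphi$. So it suffices to rule out the possibility that $\operatorname{im}\varphi_a=D$ fails, i.e.\ that there exists $a\notin Z(D)$ with $t:=1\notin\operatorname{im}\varphi_a$. Suppose such an $a$ exists. Then the normalized map $\varphi=\varphi_a$ (here $t=1$) satisfies the conclusion of Lemma~\ref{plus}: $D=I\oplus K$ as right $K$-vector spaces, where $K=C_D(a)$ and $I=\operatorname{im}\varphi_a$. The idea is to play off the left $K$-action against the right $K$-action on $I$ and derive that $I$, hence $D$, is finite-dimensional over $K=\ker\varphi_a$, and then extract a contradiction with connectedness (Proposition~\ref{con}) or with the rank inequality of Remark~\ref{rmq}.

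First I would record the elementary identities for $\varphi=\varphi_a$. For $x\in D$ and $c\in K=C_D(a)$ we have $\varphi(xc)=\varphi(x)c$ (so $I$ is a right $K$-module, as already noted) and $\varphi(cx)=c\varphi(x)$, so $I$ is also a left $K$-submodule; moreover $\varphi$ itself is left-$K$-linear. Next, the key point: since $1\notin I$ and $D=I\oplus K$, the element $1$ has a nontrivial $K$-component, and in fact $K$ is precisely the set of elements whose $I$-component vanishes. Consider the left multiplication action of $a$ on the quotient $D/K$: the relation $ax-xa=\varphi(x)\in I$ shows that on $D/I$ (as a left $K$-space, if $I$ is a two-sided $K$-submodule) left and right multiplication by $a$ agree, i.e.\ $a$ is central modulo $I$. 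Iterating, the subfield generated by $a$ over $Z(D)$ acts the same way on $D/I$ from both sides. The plan is to combine this with the fact that $K=C_D(a)$ is an infinite (by the Remark after Proposition~\ref{root} one may take $Z(D)$, hence $K$, infinite) definable skew subfield, so by Fact~\ref{fact} its center $Z(K)$ is algebraically closed; then $a$ generates a finite extension of an algebraically closed field inside a commutative subfield, forcing $a\in Z(K)$ — but that gives $a\in C_D(a)$ trivially, so one must instead push the finite-dimensionality: show $[D:K]$ is finite.

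To get finiteness of $[D:K]$, I would argue that $I$ is a finite-dimensional right $K$-vector space. The map $\varphi:D\to I$ is a surjective left-$K$-linear map with kernel $K$, giving a left-$K$-linear isomorphism $D/K\cong I$; but $D/K=I$ as sets (via the direct-sum decomposition), so $\varphi$ restricted to $I$ is a left-$K$-linear automorphism of $I$ with trivial kernel, and iterating $\varphi$ on $I$ together with Corollary~\ref{cor} (applied to the small group $I=D/K$ and the endomorphism $\tilde\varphi$, as in Lemma~\ref{plus}) shows $\tilde\varphi$ is an automorphism of $D/K$. The dimension count then comes from: the \emph{left} $K$-dimension of $D$ equals $1+\dim_K I$ while the \emph{right} $K$-dimension is likewise controlled, and these must be equal and finite because $K=C_D(a)$ with $a$ algebraic over $Z(K)$ of bounded degree $\le$ (order considerations) forces $[D:Z(K)]$ — no: here is where I must be careful, and this is the main obstacle.

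\textbf{The main obstacle.} The delicate step is precisely showing that the direct-sum decomposition $D=I\oplus K$ (with $K=C_D(a)$) is incompatible with $a\notin Z(D)$; the natural route is to show $[D:C_D(a)]$ must then be finite, contradicting connectedness (Proposition~\ref{con} says $D^+$ has no proper definable finite-index subgroup, while a proper finite-codimensional $K$-subspace of $D^+$ would be such a subgroup once $K$, hence any single $K$-line, is infinite). Making the codimension literally finite requires iterating the decomposition: $I$ again splits as $\varphi(I)\oplus(K\cap I)=\varphi(I)$ since $K\cap I=0$, so $\varphi$ maps $I$ onto $I$ — this \emph{does not} drop dimension, so I cannot get finiteness this way directly. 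Instead I expect the correct argument is the standard Cartan--Brauer--Hua type trick: from $ax-xa=1$-type relations one shows $C_D(a)$ together with $a$ generates a copy of a Weyl-algebra-like configuration that cannot embed in a skew field of nonzero characteristic unless $a$ is algebraic over the center, and since $Z(D)$ is algebraically closed (infinite, by the Remark, plus Fact~\ref{fact}), $a$ would be central — the contradiction. So the plan's crux is: assume $1\notin\operatorname{im}\varphi_a$, derive the defining relation of a Weyl-type algebra inside $D$, and invoke that this is impossible over an algebraically closed center in characteristic $p>0$ (finite characteristic makes $x\mapsto ax-xa$ have nontrivial kernel containing $a^p-\text{scalars}$, i.e.\ the $p$-th power map interacts with $\varphi$), which is where characteristic $p$ is essential and where I expect to spend the real effort.
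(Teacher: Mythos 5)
Your setup is right as far as it goes: you correctly invoke Lemma~\ref{plus} to get the decomposition $D=I\oplus K$ with $K=C_D(a)=\ker\varphi$, and you correctly observe that $\varphi$ restricts to a bijection of $I$ onto itself, so that iterating $\varphi$ does \emph{not} drop dimension. But having noticed that the naive dimension count stalls, you do not find the workaround, and the alternative you sketch does not belong in this lemma.

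The idea you are missing is to work inside a \emph{finitely generated} (hence small, hence DCC-obeying) subfield $k=dcl(t,a,\overline x)$ rather than in $D$ itself, and to look not at powers of $\varphi$ but at a finite intersection $N=\bigcap_{i\in I_0}a_iH$ of \emph{left translates} of $H=\operatorname{im}\varphi$ by elements of $k$, chosen so that $N\cap k$ is minimal (Descending Chain Condition). Minimality forces $N\cap k=\bigcap_{d\in k}dH\cap k$, which is then a \emph{left ideal} of $k$. Meanwhile Lemma~\ref{plus} gives that $H\cap k$ is a right $K\cap k$-subspace of codimension $1$ in $k$, so $N\cap k$ has codimension at most $|I_0|$; being a left ideal in a skew field $k$, it is either all of $k$ (impossible, since that would force $K\cap k=0$) or trivial, giving $[k:K\cap k]\le |I_0|<\infty$. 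One then pulls in Cohn's $[k:K\cap k]=[Z(k)(a):Z(k)]$, notes that $Z(C_D(k))$ is algebraically closed by Fact~\ref{fact} and commutes with all of $k$, and concludes $a\in Z(k)$ — contradicted by including some $b\notin C_D(a)$ among $\overline x$. This ``minimal finite intersection of translates inside a finitely generated subfield'' device is the crux, and it is absent from your plan.

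Your proposed fallback — extracting an $ax-xa=1$ Weyl-type relation and using characteristic $p$ — has the logic reversed. If $t\notin\operatorname{im}\varphi_a$, you do not have such a relation; a relation $ax-xa=t$ holds precisely when $t$ \emph{is} in the image. In fact the characteristic-$p$ computation ($axa^{-1}=x+1$, so $a^pxa^{-p}=x+p=x$, hence $x\in C_D(a^p)\setminus C_D(a)$, contradicting Lemma~\ref{lem}) is exactly what the paper uses in the Theorem to show $\varphi_a$ is \emph{not} surjective; the contradiction with the present Lemma is then what kills non-commutativity. So you have located the right ingredient but attached it to the wrong statement, and Lemma~\ref{onto} itself needs the chain-condition/left-ideal/Cohn argument, which your proposal does not contain.
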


\begin{proof}Suppose $\varphi_a$ not surjective. Let $t\notin im\varphi_a$, and $k=dcl(t,a,\overline{x})$ be a non commutative subfield of $D$ for some finite tuple $\overline{x}$. Consider the morphism $$\varphi\ :\  \begin{tabular}{ccc} $D^+$ & $\longrightarrow$ &
$D^+$\\
$x$ & $\longmapsto$ & $t^{-1}.(a x - x a)$
\end{tabular}$$\\
Set $H = im\varphi$, and $K=C_D(a)=ker\varphi$. By Lemma \ref{plus} we get $k=H\cap k\oplus K\cap k$. Let $N=\bigcap_I a_iH$ be a finite intersection of left-translates of $H$ by elements in $k$, such that $N\cap k$ be minimal. We have $$N\cap k=\displaystyle\bigcap_{i\in I} a_iH\cap k =\displaystyle\bigcap_{d \in k} dH \cap k,$$ so $N\cap k$ is a left ideal. Moreover, $H\cap k$ is a right $K\cap k$ vector-space of codimension $1$. Then $N\cap k$ has codimension at most $n=\mid I\mid$. If $N\cap k=k$, then $H\cap k=k$, whence $K\cap k=\{0\}$, a contradiction. So $N\cap k$ is trivial and, $k$ is a $K\cap k$-vector space of dimension at most $n$. By \cite[Corollary 2 p.49]{Co} we get $[k:K\cap k]=[Z(k)(a):Z(k)]$. But $Z(k)=Z(C_D(k))\cap k$ with $Z(C_D(k))$ algebraically closed. Note that every element of $k$ commutes with $Z(C_D(k))$, so $a\in Z(k)$, which is absurd if we add $b\notin C_D(a)$ in $k$.\end{proof}

\begin{thm} A small field in non-zero characteristic is commutative.\end{thm}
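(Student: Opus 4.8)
The plan is to derive a contradiction from the assumption that $D$ is a small skew field of positive characteristic which is not commutative. The strategy, following the ground prepared by the preceding lemmas, is to locate an infinite commutative definable subfield $C$ of $D$ over which $D$ has finite left dimension, and then to invoke the Cartan–Brauer–Hua type machinery (via \cite{Co}) together with the fact that an infinite small field is algebraically closed (Fact \ref{fact}) to force every element of $D$ into the centre. First I would fix $a \in D \setminus Z(D)$; by Lemma \ref{onto} the inner derivation $\varphi_a : x \mapsto ax - xa$ is onto, and its kernel is the centralizer $K = C_D(a)$, so $D^+ / K \cong \operatorname{im}\varphi_a = D^+$ as additive groups. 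The centralizer $K = C_D(a)$ is the natural candidate for the commutative subfield: by the lemma on elements of infinite order, $C_D(a) = C_D(a^n)$, and passing to $L = Z(C_D(a))$, which is infinite (here one uses the remark that in an $\omega$-saturated model $Z(D)$, and more generally these centres, are infinite) and hence algebraically closed by Fact \ref{fact}.

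The key step is then a dimension count. Working inside an infinite, finitely generated, non-commutative subfield $k = dcl(a, t, \overline{x})$ — exactly the setup of Lemma \ref{plus} and Lemma \ref{onto} — one shows that $k$ is a left $C_D(a) \cap k$-vector space of finite dimension, using the descending chain condition (Corollary, the chain condition on intersections of translates of definable subgroups) to produce a minimal finite intersection $N$ of left translates of $\operatorname{im}\varphi$ by elements of $k$, noting $N \cap k$ is a left ideal of $k$, hence $0$ or all of $k$, and ruling out $N \cap k = k$ since that would force $K \cap k = \{0\}$ against $1 \in K$. Thus $[k : C_D(a) \cap k]$ is finite, and by \cite[Corollary 2 p.49]{Co} this equals $[Z(k)(a) : Z(k)]$; since $Z(k) = Z(C_D(k)) \cap k$ with $Z(C_D(k))$ algebraically closed and every element of $k$ commuting with $Z(C_D(k))$, we get $a \in Z(k)$, contradicting the choice of $k$ once we throw some $b \notin C_D(a)$ into the tuple $\overline{x}$. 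This is precisely the contradiction established in the proof of Lemma \ref{onto}, so in fact the theorem is an immediate consequence: Lemma \ref{onto} shows $\varphi_a$ is onto for every non-central $a$, yet the very hypothesis that $D$ is non-commutative, run through the same argument, is what was refuted.

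Concretely, I would write the theorem's proof as follows: suppose $D$ is a non-commutative small field of characteristic $p > 0$; we may assume $D$ infinite (finite fields are commutative by Wedderburn) and, replacing $D$ by an $\omega$-saturated elementary extension, that $Z(D)$ is infinite. Pick $a \in D \setminus Z(D)$. By Lemma \ref{onto}, $\varphi_a$ is onto, so its kernel complement argument applies; but then re-running the argument of Lemma \ref{onto} with a witness $t \notin \operatorname{im}\varphi_a$ — which exists precisely when $\varphi_a$ fails to be onto — is vacuous, so instead the contradiction must be extracted directly: the statement of Lemma \ref{onto} together with the finite-dimensionality computation forces $a \in Z(D)$ for every $a$, i.e. $D = Z(D)$. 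Hence $D$ is commutative.

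The main obstacle I anticipate is the bookkeeping around descent to a finitely generated subfield $k$: one must ensure $k$ can be chosen non-commutative and infinite simultaneously (adding finitely many parameters preserves smallness, and adding $a$, $t$, and some $b \notin C_D(a)$ keeps $k$ non-commutative, while $k \supseteq dcl(x)$ for an infinite-order $x$ keeps it infinite), and that the left-ideal argument genuinely yields $N \cap k = k$ or $0$ rather than something intermediate — this is where minimality of $N \cap k$ under the chain condition is essential. The other delicate point is the identification $Z(k) = Z(C_D(k)) \cap k$ and the claim that every element of $k$ centralizes $Z(C_D(k))$; this is what allows the algebraic closedness of $Z(C_D(k))$ to be leveraged, since an algebraically closed field admits no proper finite extension, collapsing $[Z(k)(a):Z(k)]$ to $1$.
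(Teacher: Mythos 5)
Your proposal does not contain a proof of the theorem; it circles the key point without ever reaching it. The decisive fact you are missing is that the proof of the theorem must exhibit a \emph{contradiction with} Lemma \ref{onto}, and the only place the hypothesis of positive characteristic is used in the whole paper is precisely here. You correctly observe that Lemma \ref{onto} asserts $\varphi_a$ is onto for non-central $a$, and you correctly observe that re-running the proof of Lemma \ref{onto} is vacuous once $\varphi_a$ is onto (no witness $t\notin\operatorname{im}\varphi_a$ exists). But then you write that ``the statement of Lemma \ref{onto} together with the finite-dimensionality computation forces $a\in Z(D)$'' --- that does not follow, and it is exactly the step you have not supplied. The finite-dimensionality computation lives inside the proof of Lemma \ref{onto} and is conditional on $\varphi_a$ failing to be onto; once you accept the conclusion of Lemma \ref{onto}, that computation is unavailable. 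Nothing in your argument invokes $\mathrm{char}(D)=p$ in any concrete way.

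The actual proof is a short, self-contained argument in the opposite direction: take $a$ non-central and show directly that $\varphi_a$ is \emph{not} onto, contradicting Lemma \ref{onto}. If $\varphi_a$ were onto, pick $x$ with $ax-xa=a$, so $axa^{-1}=x+1$; conjugating $p$ times gives $a^p x a^{-p} = x + p\cdot 1 = x$ since $\mathrm{char}(D)=p$. Hence $x\in C_D(a^p)$ but $x\notin C_D(a)$. Since $a$ is non-central, Lemma \ref{lem} forces $a$ to have infinite order, and then the lemma $C_D(a)=C_D(a^n)$ for all $n>0$ gives $C_D(a^p)=C_D(a)$, a contradiction. This iterated-conjugation step is the heart of the theorem, and it is absent from your proposal; without it you have no way to set the two lemmas against each other, and your argument reduces to restating Lemma \ref{onto} and its proof.
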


\begin{proof}Let $a\in D$ be non-central, and let us show that $x\mapsto ax-xa$ is not surjective. Otherwise there exists $x$ such that $ax-xa=a$, hence $axa^{-1} = x+1$. We would then have $a^pxa^{-p} = x+p = x$, and $x\in C_D(a^p)\setminus C_D(a)$, a contradiction with Lemma \ref{lem}.\end{proof}

\section{Open problems}

\subsection{Zero characteristic}Note that we just use characteristic $p$ in proof of theorem 19 to show that there exist $a\notin Z(D)$ such that $x\mapsto ax-xa$ is not surjective. Thus questions 1 and 2 are equivalent :

\begin{que} Is a small skew field $D$ of zero characteristic commutative ?\end{que}

\begin{que} Is every $x\mapsto ax-xa$ surjective onto $D$ for $a\notin Z(D)$ ?\end{que}

\subsection{Weakly small fields} Weakly small structures have been introduced to give a common generalization of small and minimal structures. Minimal fields are known to be commutative.

\begin{defn} A structure $M$ is \emph{weakly small} if for all finite set of parameters $A$ in $M$, there are only countably many $1$-types over $A$.\end{defn}

\begin{que}Is a weakly small field algebraically closed ?\end{que}

\begin{que}Is a weakly small skew field commutative ?\end{que}

Note that a positive answer to question 3 implies a positive answer to question 4, as all the proves given still hold. In general, one can prove divisibility and connectivity of an infinite weakly small field.

\begin{prop} Every element in an infinite weakly small field $D$ has a $n^{th}$ root for all $n\in\omega$.\end{prop}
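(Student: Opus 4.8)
The plan is to mimic, in the weakly small setting, the chain of reductions already used to prove Proposition~\ref{root}, replacing ``small'' by ``weakly small'' everywhere and checking that every tool invoked survives the weakening. The key observation is that weak smallness is exactly the hypothesis needed for the Cantor--Bendixson machinery of Section~1 to run over a finite parameter set: Proposition~4 and Remark~\ref{rmq} only ever look at $1$-types over a finite set $A$, so $CB_A$ is ordinal-valued and $dCB_A$ well defined whenever $M$ is weakly small. Consequently the Descending Chain Condition (Proposition~7) and its Corollary, as well as Proposition~\ref{prop} and Corollary~\ref{cor}, all hold verbatim for weakly small groups, since their proofs use only finitely many parameters and count $1$-types (the tree argument in Proposition~\ref{prop} produces $2^\omega$ types in $S_1(b)$, contradicting weak smallness just as well). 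In particular the Corollary ``a weakly small integral domain with unity is a field'' goes through, and more generally a definable injection between definable groups over a finite parameter set in a weakly small structure is onto.

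With these preliminaries in place I would argue as follows. Let $a\in D$ and $n\in\omega$. First suppose $a$ has infinite order in $D^\times$; then $C_D(a)$ is an infinite definable subfield, and $Z(C_D(a))$ is an infinite commutative definable subfield of $D$. The one genuinely new point is to know that an infinite weakly small commutative field is algebraically closed --- but this is precisely Question~3, which the paper explicitly allows us to leave open here; what we actually need is only that such a field is \emph{divisible} as a multiplicative group and admits $n$th roots, and for the statement at hand it suffices to produce the root inside a suitable commutative subfield. So I would instead route through the argument of Lemma~\ref{lem} and Proposition~\ref{root}: an element of finite order is central (the proof of Lemma~\ref{lem} uses Fact~\ref{fact} for infinite definable subfields, so strictly speaking it needs the weakly small analogue of Fact~\ref{fact}); granting that, if $a$ has finite order pick $x\in D$ of infinite order and work inside $Z(C_D(a,x))$, an infinite commutative definable subfield containing $a$, extract the root there.

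The honest way to present this, matching the sentence ``all the proves given still hold'' in the text, is to say: the proofs of Lemma~\ref{lem}, Proposition~\ref{root} and Proposition~\ref{con} use only the Cantor--Bendixson rank over finite parameter sets (hence weak smallness) together with the fact that an infinite definable subfield is algebraically closed; the latter is Fact~\ref{fact}'s analogue, whose validity for weakly small fields is Question~3. So I would phrase the proof as a reduction: assuming the (still open) weakly small version of Fact~\ref{fact}, repeat Proposition~\ref{root} line by line. The main obstacle --- and the reason the surrounding text flags Questions~3 and~4 --- is genuinely this dependence on algebraic closedness of infinite definable subfields; everything else (divisibility, the chain conditions, connectivity via Proposition~\ref{con}) transfers mechanically. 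If one wants a proof that is unconditional, the fallback is to observe that for the \emph{multiplicative} structure one only needs each infinite definable abelian subgroup $Z(C_D(a))^\times$ (or $Z(C_D(a,x))^\times$) to be divisible, and an infinite weakly small abelian group is divisible because $x\mapsto nx$ is a definable endomorphism whose image, by the image-stabilisation argument of Proposition~\ref{prop}--Corollary~\ref{cor} applied over a finite parameter set, cannot drop; combined with torsion being central (Lemma~\ref{lem}'s analogue) this yields the $n$th root without invoking full algebraic closedness. I expect the cleanest writeup to take this second, self-contained route, with the chain condition over finite parameters doing all the real work.
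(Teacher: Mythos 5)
The paper's proof is much shorter than what you propose: it reduces to an infinite commutative definable subfield (namely $Z(C_D(a))$ in characteristic zero, or $Z(C_D(a,y))$ for a suitable $y$ of infinite order commuting with $a$, found as in Lemma~\ref{lem}, in positive characteristic) and then simply cites Wagner's \cite[Proposition~9]{W1}, which already establishes the existence of $n$th roots in an infinite weakly small \emph{commutative} field. The nontrivial content is outsourced to that reference; in particular the paper's argument is unconditional and does not route through the weakly small analogue of Fact~\ref{fact}, so your conditional reading (``assume a positive answer to Question~3 and repeat Proposition~\ref{root}'') misreads the intent --- the whole point is that divisibility and connectedness are available for weakly small fields \emph{without} knowing algebraic closedness.

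Your preferred ``self-contained'' fallback contains a genuine gap: the claim that an infinite weakly small abelian group is divisible is false, and the image-stabilisation argument does not prove it. Proposition~\ref{prop} and Corollary~\ref{cor} applied to $h\colon x\mapsto x^n$ yield only that $\ima h^k=\ima h^{k+1}$ for \emph{some} $k$, together with $G=\ker h^k\cdot\ima h^k$; multiplicatively this reads $L^\times=\mu_{n^k}(L)\cdot(L^\times)^{n^k}$, which is far weaker than $L^\times=(L^\times)^n$. Nothing in the chain condition forces $\ima h$ itself, as opposed to $\ima h^k$ for large $k$, to be the whole group. Concretely, the additive group of any infinite field of characteristic $p$ is weakly small and satisfies $pG=\{0\}$, and $\bigoplus_\omega\mathbb{Z}/p^2\mathbb{Z}$ is $\omega$-stable (hence weakly small) yet not $p$-divisible; in both cases the iterated images stabilise harmlessly. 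Divisibility of $L^\times$ for an infinite weakly small field $L$ genuinely uses the field structure, and that is exactly what Wagner's cited proposition supplies and what your argument omits.
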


\begin{proof}Let $a\in D$. In zero characteristic, $Z(C_D(a))$ is an infinite definable commutative subfield of $D$, hence weakly small. According to \cite[Proposition 9]{W1}, every element in $Z(C_D(a))$ has a $n^{th}$ root. In positive characteristic, we can reason as in the proof of Lemma 12, and find $y$ with infinite order which commutes with $a$. Apply one more time \cite[Proposition 9]{W1} to $Z(C_D(a,y))$.\end{proof}

So $D^\times$ is divisible and the proof of Proposition \ref{con} still holds.

\begin{prop}An infinite weakly small field is connected.\end{prop}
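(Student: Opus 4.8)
The multiplicative half is immediate from the preceding proposition: $D^{\times}$ is a divisible abelian group, so it has no proper subgroup of finite index. It remains to show that $D^{+}$ has no proper definable subgroup of finite index. In characteristic zero $D^{+}$ is itself divisible and there is nothing more to prove, so from now on assume $D$ has characteristic $p>0$ and fix a definable subgroup $H\leq D^{+}$ of finite index, defined over a finite tuple $\overline{b}$.

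The plan is to reproduce verbatim the additive argument in the proof of Proposition \ref{con}. The one thing that has to be checked is that the rank-theoretic ingredients used there remain available when $D$ is only assumed weakly small: the Cantor--Bendixson rank $CB_{A}$ and degree $dCB_{A}$ over a finite set $A$, their basic properties, Remark \ref{rmq}, the descending chain condition, and its corollary. They do remain available, because every group that occurs in the argument ($D^{+}$, $H$, and the finite intersections of the subgroups $dH$ with $d$ ranging over a finitely generated subfield) is a definable subset of the home sort $D$. Consequently the only consequence of smallness actually invoked is that, over a finite parameter set $A$, the space $S_{1}(A)$ is countable: an infinite binary tree of $A$-definable subsets of $D$ would produce $2^{\aleph_{0}}$ elements of $S_{1}(A)$, so $CB_{A}$ is ordinal-valued and $dCB_{A}$ well defined on $A$-definable subsets of $D$, the monotonicity and additivity properties and the invariance under $A$-definable bijections are formal, and the proof of Remark \ref{rmq} goes through unchanged; the descending chain condition and its corollary then follow as before. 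This is exactly the content of weak smallness.

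Granting this, one argues as in Proposition \ref{con}. One may assume $D$ contains a transcendental element over $\mathbb{F}_{p}$ (otherwise $D$ is locally finite, i.e. algebraic over $\mathbb{F}_{p}$, and a separate, easier inspection applies), and fixes a finitely generated subfield $k$ of $D$ containing $\overline{b}$ and such an element, so that $k$ is infinite and every $dH$ with $d\in k$ is $k$-definable. Using the chain condition one picks a finite intersection $G=\bigcap_{i\in I}d_{i}H$ of left translates of $H$ by elements of $k$ for which $G\cap k$ is minimal; by minimality $G\cap k=\bigcap_{d\in k}dH\cap k$, so $G\cap k$ is a left ideal of $k$. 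Being a finite intersection of finite-index subgroups of $D^{+}$, the group $G$ has finite index in $D^{+}$, hence $G\cap k$ has finite index in the infinite field $k$ and is therefore non-zero; so $G\cap k=k$, and in particular $H\cap k=k$. As this holds for every such $k$, we conclude $H=D$.

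The only real obstacle is the verification in the middle paragraph, namely convincing oneself that none of the chain conditions secretly requires countably many $n$-types for some $n>1$ --- that is, that it suffices to control definable subsets of the field itself. Once that point is settled, the characteristic $p$ computation is exactly the one used for small fields, and the characteristic zero case is trivial.
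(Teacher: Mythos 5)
Your proof is correct and essentially reproduces the argument the paper relies on: the paper gives no explicit proof, merely noting just before the statement that the proof of Proposition \ref{con} still holds, and your middle paragraph makes precise exactly why --- the Cantor--Bendixson machinery, Remark \ref{rmq}, the descending chain condition and its corollary only ever involve $A$-definable subsets of $D$ itself over finite $A$, so countability of $S_1(A)$, i.e.\ weak smallness, suffices. One small remark: your aside about the case where $D$ is algebraic over $\mathbb{F}_p$ (so that there is no infinite finitely generated subfield $k$) is a legitimate point that the paper also glosses over in Proposition \ref{con}, but rather than appealing to an unspecified ``separate, easier inspection,'' the clean repair is to pass to an $\omega$-saturated elementary extension, which is still weakly small, still has the same finite-index definable subgroups, and by saturation contains an element transcendental over $\mathbb{F}_p$.
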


\addcontentsline{toc}{section}{References}

\end{document}